\theoremstyle{plain}
\newtheorem{theorem}{Theorem}[section]
\theoremstyle{definition}
\theoremstyle{remark}
\DeclareMathOperator{\lin}{{\mathrm lin}}
\DeclareMathOperator{\co}{{\mathrm co}}
\DeclareMathOperator{\Co}{{\mathrm Co}}
\DeclareMathOperator{\Dim}{{\mathrm dim}}
\newcommand{\bleq}[2]{\begin{equation}\label{#1}{#2}\end{equation}}
\newcommand{\imset}{i=1,2,\dots, m}
\newcommand{\nll}{{\mathbf 0}}
\newcommand{\one}{{\mathbf 1}}
\newcommand{\bk}{b^K}
\newcommand{\bc}{b^{\co}}
\newcommand{\br}{b^\rho}
\newcommand{\rom}{\rho_{\min}}
\newcommand{\ac}{a^{\co}}
\newcommand{\R}{\mathbb{R}}
\newcommand{\octave}{OCTAVE\xspace}
\newcommand{\ptp}{PTP\xspace}
\newcommand{\qp}{QP\xspace}
\begin{document}
\title{Replacing projection on finitely generated convex cones with projection on bounded polytopes}
\author{
E.A. Nurminski \thanks{E.~A. Nurminski. Email: nurminskiy.ea@dvfu.ru} \\
Far Eastern Federal University, Vladivostok, Russia
}
\date{\today}
\maketitle

\begin{abstract}
This paper is devoted to the general problem of projection onto a polyhedral convex cone generated by a finite set of generators.
This problem is reformulated into projection onto the polytope obtained by simple truncation of the original cone. Then it can be solved with just two closely related projections onto the same bounded polytope.
This approach's computational performance is conditioned by
the crucial tool's efficiency for solving the fundamental problem of
finding the least norm element in a convex hull of a given finite set of points.
In our numerical experiments, we used for this purpose the specialized finite algorithm
implemented in the open-source system for matrix-vector calculations \octave.
This algorithm is practically indifferent to the proportions between the number of generators and their dimensionality
and significantly outperformed a general-purpose quadratic programming algorithm of the active-set variety
built into \octave when the number of points exceeds the dimensionality of the original problem.
\end{abstract}

{\bf Keywords:} Orthogonal projection; convex cones; truncation

\section*{Introduction}
This paper advocates a simple finite algorithm for solving orthogonal projection problems
onto closed convex cones formed by finite numbers of generators.
The numerous applications of this problem in restricted statistical inferences
\cite{resbook}, machine learning,
digital signal processing,
linear optimization
\cite{nurmi-globo}
and other problems
keep this problem as one of the most popular research subjects.
For instance,
it ranks 5th in popularity
in Wiki minimization internet resource \cite{wikimin}
with more than 480000 visits at the time of writing this note.

This close attention
inspired the development of many effective algorithms, especially for
isotone projection cones \cite{NN-isotone}.
For simplicial or lattice cones (see f.i. \cite{okk} and the references therein)
when the number of cone generators is less than the dimensionality
of underlying space, the results are less spectacular,
even when there are quite encouraging experiments
with different heuristics
\cite{AkNeNe2010}.

Here we consider the general case with the arbitrary number of generators 
which attracted less attention
possibly because of combinatorics,
involved in the selection of active face
on which the solution of the projection problem lies.
We are interested in a finite projection algorithm,
even if there are also iterative schemes for isotone cones \cite{iso}, which might be indispensable for huge-dimensional problems.

Great attention is given in the literature to projection onto truncated convex cones as local approximations of polyhedrons.
These truncated cones are then replaced by non-truncated simplicial cones
to simplify the following projection operations.
The simple idea of the current submission is just the opposite.
Instead, we suggest to project onto the finitely generated cone by truncating it to the bounded
polytope and use the well-established algorithm
\cite{ptp}
based on the article \cite{asm} with many computational improvements.
To justify this approach, we show the simple way to truncate a cone in such a manner that ensures the equivalence of truncated and non-truncated problems.
Then we present the polytope projection algorithm and compare its performance with the general-purpose quadratic programming code.
Finally, the polytope projection algorithm is applied to solve polyhedral cone projection problems known from the literature.
Apart from demonstrating the algorithm performance, it gave a chance to compare it with the popular iterative FISTA algorithm.
\section{Notations and preliminaries}
\label{NaP}
Our basic space is a finite-dimensional Euclidean space, denoted as \(E\).
If necessary, the dimensionality of \(E\) is derived as \(\Dim(E)\).
The non-negative ortant of \(E\) is denoted as \(E_+\).
The null vector of \(E\) is denoted as \(\nll\) to avoid confusion with the number \(0\).
We also use the notation \(\one = (1,1,\dots,1)\) to simplify certain expressions.
The real axis is denoted by \(\R\). The non-negative part of \(\R\) is denoted by \(\R_+\).

For \(\alpha\) in \(\R\) and \(A \subset E\) we use the shorthand \(\alpha A\) for \(\{ \alpha a, a \in A \}\).
Also we use \(A\times B\) as notation for \(\{ (a,b)\ a \in A, b \in B \}\).
Single-pont set \(\{a\}\) with \(a \in E\) is denoted simply as \(a\) when it
does not lead to misunderstanding.

Convexity of subsets of \(E\) is defined in a standard way.
The convex hull of set \(X\) denoted as \(\co\{X\}\) or \(X_c\)
and the conical hull of \(X\) is denoted as \(\Co\{X\}\).
When set \(X\) has the certian structure, namely \(X = X_1 \cup X_2 \cup ... \cup X_k\) we write \(\co\{X\} = \co\{X_1, X_2, ..., X_k\}\). 
We also denote as
\[
\lin\{X\} = \{ z = \sum_{i=1}^k \lambda_i x^i, ~ \lambda_i \in \R, x^i \in X, ~ i =1,2,\dots,k; ~ k = 1, 2, \dots \}
\]
the linear envelope of a set \(X\).
Of course, due to Karateodory \(k\) may be limited to \(\dim(E)\), if necessary.
The inequality \(x \geq y\) with \(x,y \in E\) is uderstood componentwise.

We denote the standard inner product in \(E\) as \(xy\),
the Euclidean norm is denoted \(|x\|^2 = xx\) as usual.
The unit ball in \(E\) is denoted as \( U_E = \{x : \|x\| \leq 1 \}\) or \(U\) if the space is clear from the context.
We also make some use of Chebyshev metric \(\|x\|_\infty = \max_{i = 1,2,\dots, \Dim(E)} \vert x_i\vert\)
and the correspondent unit ball 
\(U_\infty(E) = \{x: \|x\|_\infty \leq 1 \} \) or \(U_\infty \) if it clear enough.
The non-negative part of Chebyshev unit cube is denoted by 
\(U_\infty^+ = \{x: \|x\|_\infty \leq 1, x \geq \nll \} \).

The (orthogonal) projection problem in \(E\) for a point \(b\) and
a closed convex subset \(X\) of \(E\) is defined as 
\bleq{prj}{\min_{x \in X} \| x - b\|^2 = \| \Pi_X(b) - b \|^2.}
It defines the single-valued projection operator \(\Pi_X : E \to X \) which is well-defined
and has many useful properties such as Lipschitz continuity with unit Lipschitz constant,
non-expansion, and others.
Because of this, it is widely used in many computational algorithms and
theoretical considerations in convex analysis and beyond.

Our primal interest in this paper is the projection problem (\ref{prj}) when the set \(X\) is a closed convex cone,
so we introduce some additional relevant definitions.

A closed convex cone \(K\) is a closed subset of \(E\),
such that \(K = K + K\) and \(\alpha K = K\) for any \(\alpha > 0\).
We assume that further on, all cones are convex and closed.

For \(A \subset E\) a conical hull of \(A\) is defined and denoted as \(\Co\{A\} = \cup_{\lambda \geq 0} \lambda A_c\).
If \( K(A) = \Co\{A\}\), \(A = \{ a^i,\ \imset \}\) then \(K(A)\) is called finitely generated and \(a^i, \imset\) are called generators.
We call \(K(A)\) a {\em pointed} cone, if \( \nll \notin \co\{A\}\).

Considering the set \(A \subset E\) as an ordered collection of generators we can think of it as synonym for 
linear operator (matrix) \(A : E \to E' \)
which operates on \(E\) in the following way:
\[ Ax = y,\quad y = (y_1, y_2, \dots, y_m),\quad y_i = a^i x,\quad \imset, m = \Dim(E'). \]
In this notation a cone \(K(A)\) can also be defined as
\bleq{posi}{K(A) = A E'_+.}
Notice that we can also apply an arbitrary scaling \(K(A) = A D E'_+\) with
a diagonal matrix \(D\) with positive entries.

The projection problem
\bleq{conpro}{ \min_{ z \in K(A)} \| b - z \|^2 = \| b - \Pi_K(b) \|^2 = \|b - b^K\|^2 }
with solution \(b^K = \Pi_K(b)\)
for finitely generated cones with explicitly given generators \(A = \{ a^i, \imset \}\)
can be written as the semidefinite quadratic programming (QP) problem
\bleq{qp-form}{
\begin{array}[t]{c}
\min \| b - z \|^2 \\
z - \sum_{i=1}^m u_i a^i = \nll, \\
u_i \geq 0,\ \imset.
\end{array}
=
\begin{array}[t]{c}
\min \| b - z \|^2 \\
z - Au = \nll, \\
u \geq \nll
\end{array}
}
in \(n+m\) variables \(z, u\) and \(n + m\) linear constrains, where \(n = \Dim(E)\).
General-purpose QP solvers commonly require positive definiteness of objective and do not use this QP problem's specific form,
which leads to their disadvantage compared with specialized methods.
The problem (\ref{qp-form}) can be somewhat simplified by getting rid of the variable \(z\).
It leads to the equivalent problem
\bleq{U-problem}{
\begin{array}[t]{c}
\min \\
u \geq \nll
\end{array}
~ \{ uHu -2b A u \}
}
in baricentric coordinates \(u\) only with \(H = A^T A\), where \(A^T\) is the transpose of \(A\).

Each of these forms has its advantages-disadvantages:
(\ref{qp-form}) is semidefinite in the space of \(x,u\) variables, but preserve the sparseness of data,
(\ref{U-problem}) has a smaller number of variables, but the matrix \(H\) is often dense and can also be semidefinite if \(m > n\). 
\section{Scaled truncation}
To transform a projection problem for polyhedral cone into
the equivalent projection problem for the bounded polytope
we have to present a suitable way to transform the cone \(K(A)\) into the polytope \(Q(A)\)
in a way that makes (\ref{prj}) and (\ref{conpro}) equivalent, that is
\[
\min_{x \in K(A)} \| x - b \| = \| b^K - b \| = \min_{x \in Q(A)} \| x - b \| = \| b^Q - b \|, \quad b^K = b^Q.
\] 

The simple way to do this is to use the definition (\ref{posi})
and 
truncate the cone \(K(A)\) in the following way:
\bleq{trunbox}{
Q(A) = P_\rho(A) = \{ z = Au,\ u \in \rho U_\infty^+(E')\} = \rho AU_\infty^+(E') = \rho P(A)
}
where \(\rho > 0\) is a scaling parameter and \(P(A) = \{ z = Au,\ u \in U_\infty^+(E')\} = \co\{\nll, A\}\).

For what follows we introduce the following notations:
\begin{itemize}
\item
\(\bk\) is the solution of the projection problem
\bleq{alpha}{\min_{x \in K(A)} \| x - b \| = \|\bk - b\| = \alpha}
\item
\(\br\) is the solution of the projection problem
\bleq{beta}{\min_{x \in \rho A_c} \| x - b \| = \|\br - b\| = \beta}
\item
\(\bc\) is the solution of the projection problem
\bleq{gamma}{\min_{x \in \co\{\nll,\rho A_c\}} \| x - b \| = \|\bc - b\| = \gamma}
\item
\(\ac\) is the solution of the least-norm problem for the set \(A_c\)
\bleq{dist}{\min_{x \in A_c} \| x \| = \|\ac\|.}
Of course \(\min_{x \in \rho A_c} \| x \| = \rho \|\ac\|\).
\end{itemize}
We assume \(\alpha > 0, ~ \gamma > 0\) to avoid triviality.

Further on we define the affine function
\bleq{lmd}{\Lambda(x) = ( x - \br)(\br - b).}
Notice that \(\Lambda(\br) = 0\) and \(\Lambda(b) = -\beta^2 < 0 \) by construction and \(\Lambda(x) \geq 0\) for \(x \in \rho A_c\)
by optimality conditions in (\ref{beta}).

The following theorem establishes the simple condition for equivalence between truncated and untruncated projection problems.
\begin{theorem} 
If \(\rho > 0\) is such that \(\|\br\|^2 > b \br\) then \(\bk = \bc\).
\label{equilem}
\end{theorem}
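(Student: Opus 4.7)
I will show that under the hypothesis, $\bc$ satisfies the variational characterization of the projection onto $K(A)$, so uniqueness forces $\bc = \bk$. Throughout, it helps to observe that $\|\br\|^2 > b\br$ is the same as $\Lambda(\nll) < 0$, since $\Lambda(\nll) = -\br(\br - b)$.

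The key geometric step is to rule out $\bc \in \rho A_c$. If $\bc$ were in $\rho A_c$, then since $\bc$ minimizes $\|\cdot - b\|$ over the larger set $\co\{\nll,\rho A_c\}$ while sitting in $\rho A_c$, uniqueness of the projection onto $\rho A_c$ would give $\bc = \br$; applying the optimality inequality $(x-\bc)(\bc-b) \geq 0$ at $x = \nll \in \co\{\nll,\rho A_c\}$ would then produce $\Lambda(\nll) \geq 0$, contradicting the hypothesis. Hence $\bc \notin \rho A_c$, and since $\co\{\nll,\rho A_c\} = \{s z : s \in [0,\rho],\ z \in A_c\}$, there is a representation $\bc = s^{*} z^{*}$ with $s^{*} \in [0,\rho)$ and $z^{*} \in A_c$ (with $s^{*} = 0$ in the case $\bc = \nll$).

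With this representation in hand, I establish the stationarity $\bc(\bc-b)=0$. It is trivial when $\bc=\nll$; otherwise $s^{*}\in(0,\rho)$, so $\lambda z^{*} \in \co\{\nll,\rho A_c\}$ for all $\lambda$ in an open two-sided neighborhood of $s^{*}$, and substituting $x = \lambda z^{*}$ into the optimality of $\bc$ yields $(\lambda - s^{*})\,z^{*}(\bc - b) \geq 0$ on both sides of $s^{*}$, whence $z^{*}(\bc - b) = 0$ and so $\bc(\bc - b) = s^{*} z^{*}(\bc - b) = 0$. Then, for every $z \in A_c$, $\rho z$ lies in $\rho A_c \subset \co\{\nll,\rho A_c\}$, so the optimality inequality at $x = \rho z$ combined with the stationarity identity gives $\rho\, z(\bc - b) \geq 0$, i.e.\ $z(\bc - b) \geq 0$. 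Since every $x \in K(A)$ has the form $x = \lambda z$ with $\lambda \geq 0$ and $z \in A_c$,
\[
(x-\bc)(\bc-b) \;=\; \lambda\,z(\bc-b) \;-\; \bc(\bc-b) \;=\; \lambda\,z(\bc-b) \;\geq\; 0
\]
for all such $x$; this is the variational inequality characterizing $\bk$, so $\bc = \bk$.

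The main obstacle is the opening step: recognizing that the hypothesis is precisely what is needed to force the truncation constraint to be inactive at $\bc$, so that the KKT system for the polytope projection collapses to the one for the cone projection. Once $\bc \notin \rho A_c$ has been secured, everything else is a routine chain of optimality inequalities along rays emanating from the origin.
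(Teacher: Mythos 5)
Your proof is correct, but it runs in the opposite direction from the paper's. The paper argues ``from the cone side'': it notes \(\alpha\le\beta\), disposes of \(\alpha=\beta\) trivially, and for \(\alpha<\beta\) bounds \(\Lambda(\bk)\le\Lambda(b)+\alpha\|\br-b\|=-\beta^2+\alpha\beta<0\), then invokes the inclusion \(K(A)\cap\{x:\Lambda(x)\le 0\}\subset\co\{\nll,\rho A_c\}\) to conclude that \(\bk\) already lies in the truncated polytope, whence \(\bk=\bc\) by uniqueness. You instead argue ``from the polytope side'': you use the hypothesis (in the equivalent form \(\Lambda(\nll)<0\)) to show the truncation constraint is inactive at \(\bc\), extract the stationarity \(\bc(\bc-b)=0\) by a two-sided perturbation along the ray through \(\bc\), and then verify the variational inequality \((x-\bc)(\bc-b)\ge 0\) for all \(x\in K(A)\), so that \(\bc=\bk\). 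The two approaches are genuinely different and each has its merits: the paper's is shorter and more visibly geometric (the hyperplane \(\Lambda=0\) separates \(b\) and \(\nll\) from \(\rho A_c\)), but it leaves the key inclusion \(K(A)\cap\{\Lambda\le 0\}\subset\co\{\nll,\rho A_c\}\) unproved --- and that inclusion is precisely where the hypothesis \(\|\br\|^2>b\br\) must enter (along each ray \(\lambda z\), \(z\in A_c\), the affine function \(\lambda\mapsto\Lambda(\lambda z)\) is negative at \(\lambda=0\) and nonnegative at \(\lambda=\rho\), hence increasing, so \(\Lambda(\lambda z)\le 0\) forces \(\lambda\le\rho\)). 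Your KKT-style argument makes the role of the hypothesis completely explicit, handles the degenerate case \(\bc=\nll\) cleanly, and is self-contained; the price is a longer chain of optimality inequalities in place of one picture.
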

\begin{proof}
The idea of the following proof is illustrated on the Fig. \ref{proof}.
By definition \(\alpha \leq \beta\).
If \(\alpha = \beta\) the theorem holds trivialy, therefore we assume the strict inequality \(\alpha < \beta\).
\begin{figure}
\begin{center}
\includegraphics[scale=0.7]{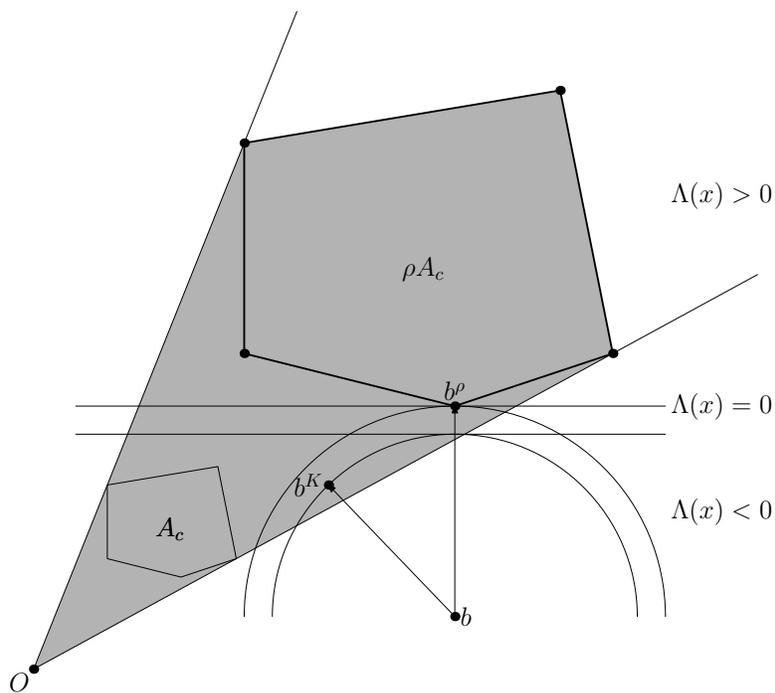}
\caption{The geometry of the proof. The shaded area is the truncated cone \(\co\{\nll, \rho A_c\}\)}
\label{proof}
\end{center}
\end{figure}
Then \(\bk \in b + \alpha U \subset b + \beta U\) where the inclusion is strict.
Estimate
\[
\begin{array}{c}
\Lambda(\bk) \leq 
\begin{array}[t]{c}
\max \\ x \in b + \alpha U
\end{array}
\Lambda(x) = \Lambda(b) +
\begin{array}[t]{c}
\max \\ z \in \alpha U
\end{array}
(\br - b)z = 
\\
\Lambda(b) + \alpha \|\br - b\| = -\beta^2 + \alpha\beta < 0.
\end{array}
\]
It implies \(\bk \in K(A) \cap \{x: \Lambda(x) \leq 0 \} \subset \co \{ \nll, \rho A_c \} \)
and by uniqness argument \(\bk = \bc\).
\end{proof}
Apart from its simplicity, the assumption of this theorem is easy to satisfy.
It is sufficient to solve the auxiliary least-norm problem \( \min_{x \in A_c} \|x\| = \|a^{\co}\| = d_A \)
and notice that \( \|\br\| \geq \rho d_A \) for any \(\rho > 0\).
By choosing \( \rho > \rom = \|b\|/d_A \) it can be obtained that \(\rho d_A > \|b\|\)
and therefore \(\|\br\| > \|b\|\) and hence
\[
\|\br\|^2 > \|b\|\|\br\| \geq b \br
\]
as theorem \ref{equilem} assumes.

It can be also shown directly that the condition \(\rho > \rom\),
which is more stringent than the condition of the theorem \ref{equilem},
is sufficient to garantee that \(b^k = \bc\). 
The geometry of this demonstration is shown on Fig. \ref{rmin}.
\begin{figure}
\begin{center}
\includegraphics[scale=0.7]{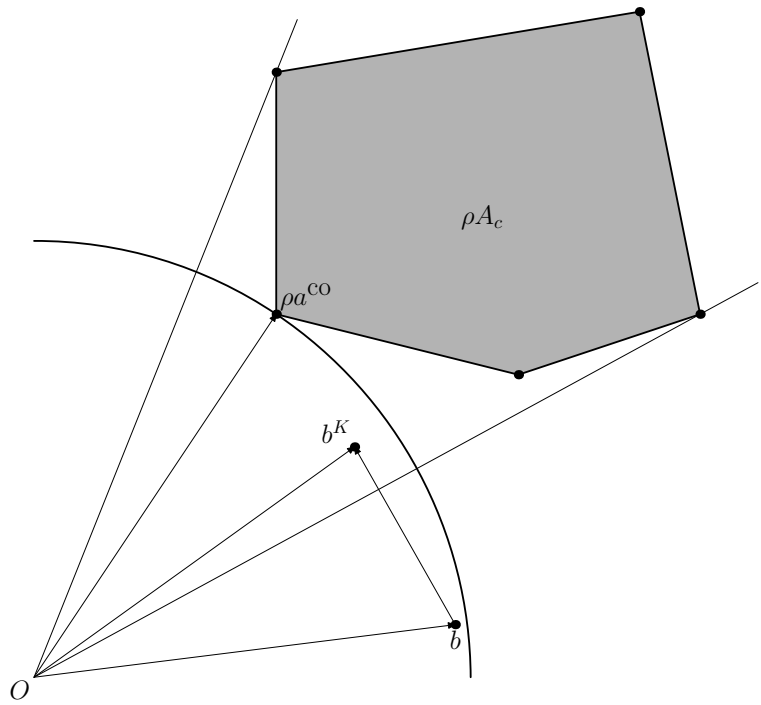}
\caption{The geometry of the proof for the condition \(\rho > \rom\).
The shaded area is the set \(\rho A_c\)}
\label{rmin}
\end{center}
\end{figure}
The formal proof goes as following: if \(\rho > \rom\) then
\[
\|x\|^2 \geq \rho^2 \|\ac\|^2 > \|b\|^2 = \|\bk\|^2 + \| \bk - b \|^2 \geq \|\bk\|^2
\]
for any \(x \in \rho A_c\) by optimality condition of \(\bk\).

Hence \(\bk \notin \rho A_c\) and moreover
\[
\|\bk\| < \theta \|\rho \ac \| \leq \theta \|x\|
\]
for \(\theta \geq 1\) and any \(x \in \rho A_c\).

Therefore \(\bk \notin \{Au, u \geq \one \}\) or \(\bk \in \co\{\nll, \rho A_c \}.\)
Then
\[
\|\bk - b\| = \min_{x\in K(A)} \|x- b\| \leq \min_{x \in \co\{\nll, \rho A_c\}} \| x - b \| \leq \|\bk - b\|
\]
and \(b^K = \bc\) which terminates this demonstration.

The following Algorithm is based on this truncation.
Using the estimate \(\rho > \rho_{\mathrm min} = \|b\|/\| b^{\mathrm min}\| \)
we can solve cone projection problem 
(\ref{conpro}) by solving 2 closely related projection problems
specified in Algorithm \ref{algo}.

\begin{algorithm}
\KwData{
The set \( A = \{ a^i, \imset \} \) of generators of a cone \(K(A)\),
the vector \(b\) to be projected on the cone \(K(A)\).
}
\KwResult{ The solution vector \(b^K\) of the problem (\ref{conpro}).}
{\bf Phase 1.}
Compute a suitable value for the scaling parameter \(\rho\) by solving
the auxiliary polytope projection problem
\bleq{phase-1}{ \min_{z \in A_c} \| z \|^2 = \| b^{\mathrm min} \|^2, }
and setting an appropriate lower estimate for the scaling parameter as \( \rho_{\mathrm min} = \|b\|/\| b^{\mathrm min} \| \).
\par\noindent
{\bf Phase 2.}
Choose any \(\rho \geq \rho_{\mathrm min} \) and solve the projection problem 
\bleq{phase-2}{\min_{z \in \co\{\nll, \rho A_c \}} \| z - b \|^2 = \| \br - b \|^2 = \|b^K - b\|^2, }
according to Lemma \ref{equilem}.
\caption{The Cone Truncated to Polytope (CTP) algorithm}
\label{algo}
\end{algorithm}
Notice that (\ref{phase-1}) and (\ref{phase-2}) are both projections on polytopes
which differ by one vertex only so
we can use advanced feasible basis in one problem to start the solution process
for the other which make this approach even more attractive.

\section{Numerical experiments}
This section describes our experience with the suggested approach
to solve projection problems for polyhedral cones by reducing them to polytope projections.
For this approach to be useful in a practical sense, we have to be first of all sure
that the polytope projection problems are solved efficiently for polytopes
of different proportions.
We use in our numerical experiments the in-house \octave \cite{octave} implementation of the algorithm
\cite{RG-proj}
which we consider now sufficiently reliable and efficient.
We substantiate this claim by computational experiments which compare
the performance of this routine with of-the-shelf \octave 
general-purpose quadratic optimization function \qp of active-set variety.

These experiments are conducted with the test problems, inspired by P.Wolfe's "pancake" random sets
\cite{Wolfe-cake}.
These sets are determined by two scalar parameters \(0 < \delta < \Delta\)
and consist of, say, \(m\) random points uniformly distributed
in a \(n\)-dimensional box-like set
\(B_{\delta, \Delta}\) which is a composition of \((n-1)\)--dimensional cube, scaled by \(\Delta\)
\(\Delta [-1, +1]^{n-1}\) and the interval \([-1, +1]\) scaled by \(\delta\)
and shifted up by \(2\delta\).

The simple expression, where \(U^k_\infty\) is the \(k\)--dimensional Chebyshev unit ball
\bleq{wolfe-pance}{ B_{\delta, \Delta} = \Delta U^{n-1}_\infty \times \delta (U^1_\infty + 2) \subset E^n }
can formally describe this set and Fig. \ref{wpc} demonstrates the geometry of this data-set for 3-dimensional case.
\begin{figure}
\begin{center}
\includegraphics{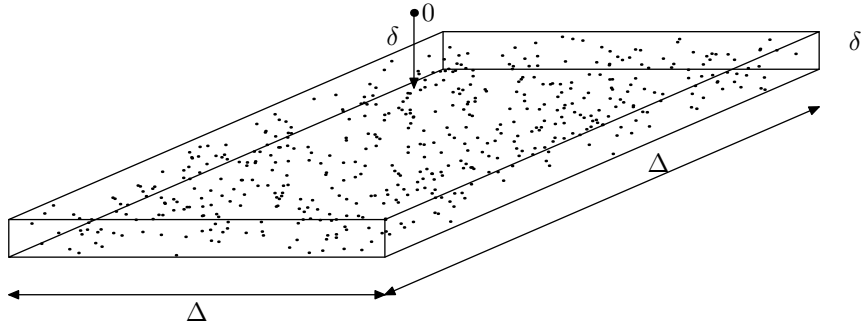} \\
\caption{The geometry of the dataset (\ref{wolfe-pance})}
\label{wpc}
\end{center}
\end{figure}
The least-norm problem for such sets can be made computationally rather difficult either by choosing extreme values
of \(\delta, \Delta\) and/or by populating \(B_{\delta, \Delta}\) with a large number of points.
If \(m \gg n\) and \(n\) is sufficiently large
the random points tend to distribute along the sides of \(B_{\delta, \Delta}\)
which makes the least-norm problem computationally ill-conditioned for \(\delta \ll \Delta\).

The following experiments, described in subsection \ref{cones} were conducted with the polyhedral cones,
generated by the sets of generators, produced by the same expression (\ref{wolfe-pance}).
The special attention in these massive experiments was given to the peculiarities in solving
projection problems (\ref{phase-1}, \ref{phase-2}) for different total sizes of the data-sets.
Here again, the polytope projection routine demonstrated its reliability
and effectiveness. 
\subsection{Specialized versus General-Purpose}
\label{polies}
The tables \ref{m-ptp-tests},
\ref{n-ptp-tests}
demonstrate the results of tests
with polytopes of different proportions
between numbers of points in the bundles and
the dimensionality of the underlying space.
The common notations in these tables:
{\tt ptp} --- the solution time (sec) for the \ptp routine,
{\tt qp} --- the same for the \qp function,
{\tt obj-ptp, obj-qp} --- objective values for \ptp and \qp
respectively, {\tt \ptp/\qp} --- the time-ratio for these algorithms.
\begin{table}
\begin{center}
\caption{The test with the fixed number of points \(m = 600\) in the bundle, \(n\) --- dimensionality of the space. }
\label{m-ptp-tests}
\begin{tabular}{|rrrrrr|}
\hline
\multicolumn{1}{|c}{n} &
\multicolumn{1}{c}{\tt ptp} &
\multicolumn{1}{c}{\tt qp} &
\multicolumn{1}{c}{obj-ptp} &
\multicolumn{1}{c}{obj-qp} &
\multicolumn{1}{c|}{ptp/qp} \\
\hline
100 & 1.03 & 408.58 & 6.18602490e-03 & 6.18602490e-03 & 0.003 \\
200 & 2.42 & 420.37 & 7.42018810e-03 & 7.42018810e-03 & 0.006 \\
300 & 4.86 & 410.15 & 8.75037778e-03 & 8.75037778e-03 & 0.012 \\
400 & 8.11 & 267.24 & 9.51469830e-03 & 9.51469830e-03 & 0.030 \\
500 & 9.42 & 163.81 & 1.03756037e-02 & 1.03756037e-02 & 0.058 \\
600 & 5.93 & 2.78 & 1.09087458e-02 & 1.09087458e-02 & 2.136 \\
700 & 5.79 & 0.55 & 1.13905009e-02 & 1.13905009e-02 & 10.585 \\
800 & 5.94 & 0.55 & 1.15418384e-02 & 1.15418384e-02 & 10.811 \\
900 & 6.10 & 0.56 & 1.11699504e-02 & 1.11699504e-02 & 10.840 \\
1000 & 6.21 & 0.56 & 1.12752146e-02 & 1.12752146e-02 & 11.057 \\
\hline
\end{tabular}
\end{center}
\end{table}
\begin{table}
\begin{center}
\caption{ Test with the fixed dimensionality \(n = 600\) of points in the bundle, \(m\) --- the number of points.}
\label{n-ptp-tests}
\begin{tabular}{|rrrrrr|}
\hline
\multicolumn{1}{|c}{m} &
\multicolumn{1}{c}{\tt ptp} &
\multicolumn{1}{c}{\tt qp} &
\multicolumn{1}{c}{obj-ptp} &
\multicolumn{1}{c}{obj-qp} &
\multicolumn{1}{c|}{ptp/qp} \\
\hline
100 & 0.29 & 0.17 & 1.10150924e-02 & 1.10150924e-02 & 1.699 \\
200 & 0.68 & 0.06 & 1.09582666e-02 & 1.09582666e-02 & 12.369 \\
300 & 0.98 & 0.13 & 1.13598866e-02 & 1.13598866e-02 & 7.553 \\
400 & 1.92 & 0.23 & 1.10932105e-02 & 1.10932105e-02 & 8.525 \\
500 & 3.32 & 0.37 & 1.13554032e-02 & 1.13554032e-02 & 8.889 \\
600 & 6.04 & 3.10 & 1.09087458e-02 & 1.09087458e-02 & 1.946 \\
700 & 12.43 & 295.61 & 1.02334609e-02 & 1.02334609e-02 & 0.042 \\
800 & 19.23 & 657.93 & 9.91185272e-03 & 9.91185272e-03 & 0.029 \\
900 & 25.13 & 1394.20 & 9.36404467e-03 & 9.36404467e-03 & 0.018 \\
1000 & 32.36 & 2050.40 & 9.03549357e-03 & 9.03549357e-03 & 0.016 \\
\hline
\end{tabular}
\end{center}
\end{table}
First of all
it can be concluded from these exeriments that \ptp routine is more robust
than \qp solver.
Solution time for \ptp grows approximately linear in these cases,
when for \qp solver it radically increases when the number of points becomes larger
than their dimension.
Highly likely that it is the result of possible semi-definiteness of the quadratic form in (\ref{U-problem}).
The \ptp routine is slower on simple problems when \(m < n\), but even in these cases, its performance is acceptable, and solution time is still low.

It can be concluded from these experiments that \ptp might be a function of choice for the cases when the number of points
in a dataset is either greater than their dimensionality or varies on a large scale.
\subsection{Numerical experiments with the cones}
\label{cones}
We performed three series of computational experiments
to demonstrate peculiarities of computational efficiency of the CTP algorithm.
Each series consisted in several experiments with conic projection problems
with dimensionality \(m\) and number of vectors \(n\) such that the size \(m \cdot n\) of the matrix \(A\)
is approximately the same, up to the integrality of \(m\) and \(n\).
These series were determined by three values of the product \(m \cdot n = 10^6, 2\cdot 10^6\) and \(3\cdot 10^6\)
and
in each of these series the dimensionality \(m\) was increasing by 50 starting with the initial value 50 while
there was a corresponding \(n \geq 500\) such that the product \(m \cdot n\) does not exceed and
is as close as possible to \(10^6, 2\cdot 10^6\) or \(3\cdot 10^6\).
All in all there were solved 237 such problems with \(\delta = 0.01\) and \(\Delta = 50\).
\begin{figure}
\begin{center}
\begin{minipage}{.45\textwidth}
\begin{center}
\includegraphics[scale=0.5]{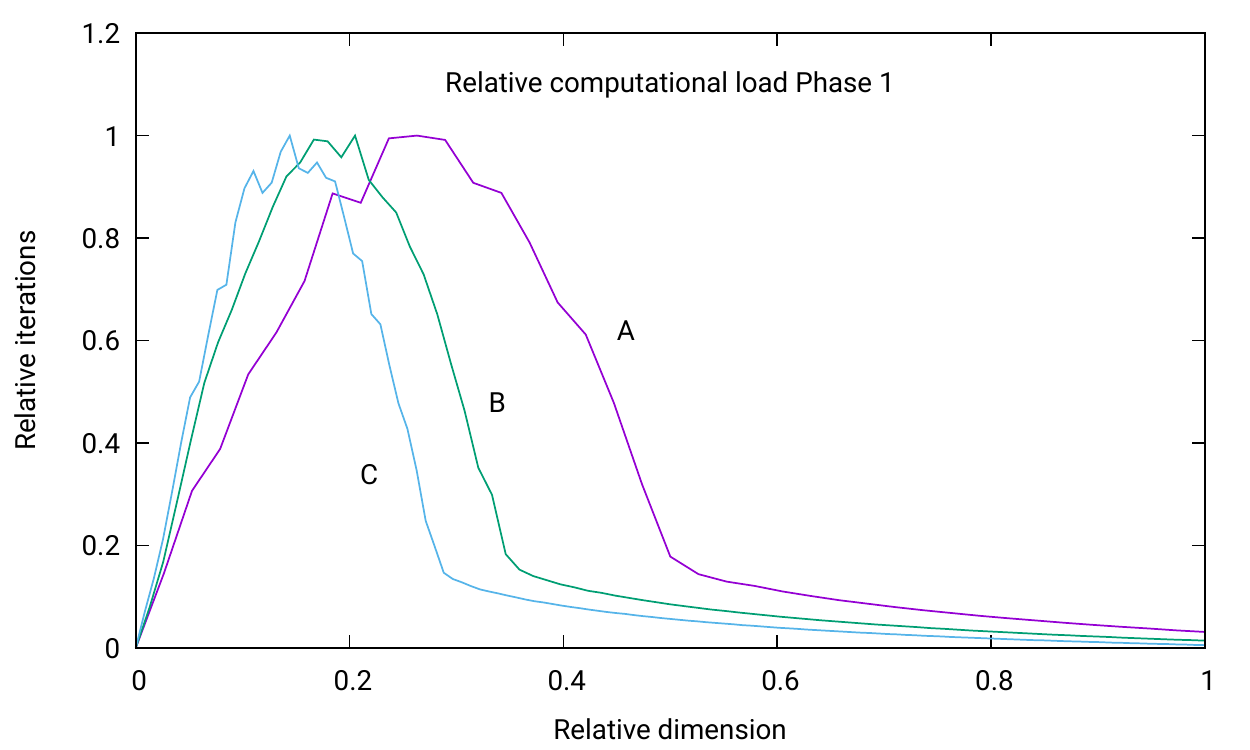} \\
Solution of the problem (\ref{phase-1})
\end{center}
\end{minipage}
\begin{minipage}{.45\textwidth}
\begin{center}
\includegraphics[scale=0.5]{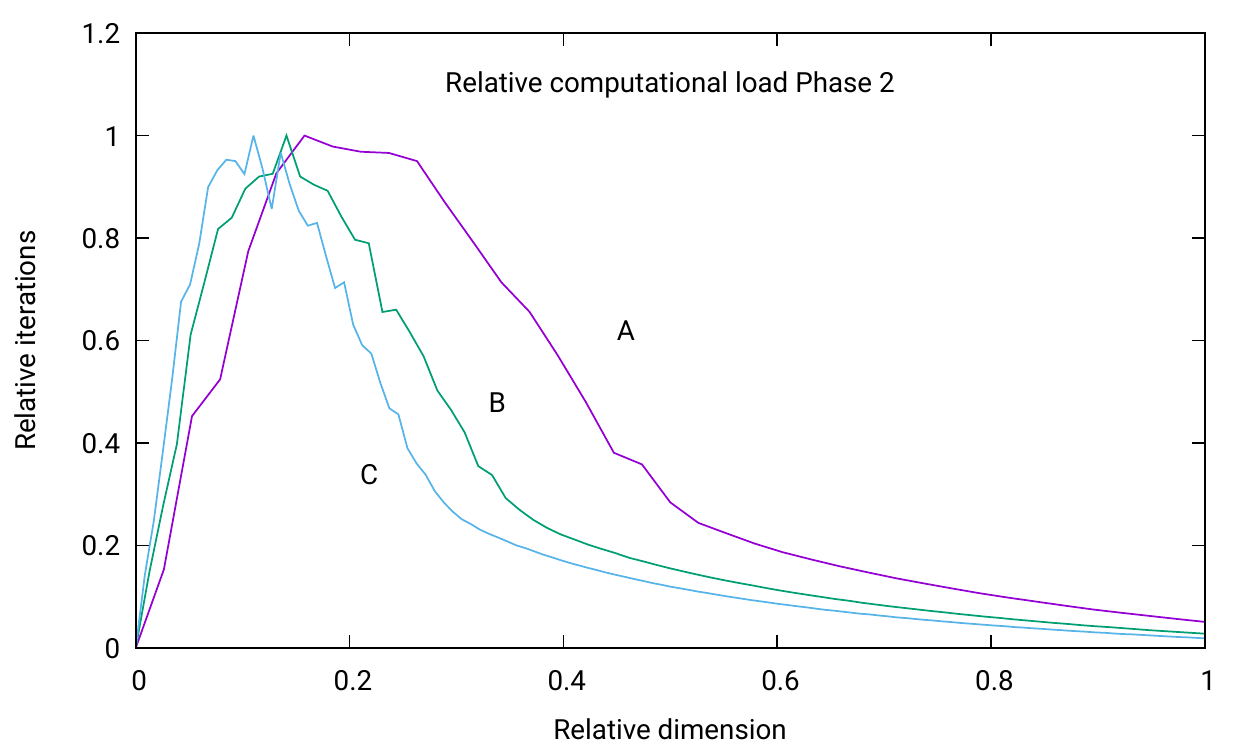} \\
Solution of the problem (\ref{phase-2}).
\end{center}
\end{minipage}
\\
\caption{
Computational complexity for
solutions of problems
(\ref{phase-1}, \ref{phase-2}
for three different values of the size of the data set:
A -- \(10^6\),
B -- \(2 \cdot 10^6\) and
C -- \(3 \cdot 10^6\) dual precision elements.
}
\end{center}
\label{fig1}
\end{figure}
The computational complexity of a solution for every problem instance
was measured in terms of the number of iterations of
underlying projection subroutines.
The results of these experiments are shown in the aggregated form in Fig. \ref{fig1}.
The iteration complexity of the CTP algorithm is demonstrated as the function of the relative dimensionality of the underlying space of variables.
The projection iterations and dimensionality were scaled to \([0,1]\) intervals to make them comparable.
The actual ranges of these values are demonstrated in Table \ref{rnges}.

The interesting feature of these graphs is that in all three series they demonstrated the highest complexity
somewhere close to each other in the range of lower dimensional problems with larger number of generators,
however for smallest dimensions with highest number of points the complexity was decreasing again.
\begin{table}
\begin{center}
\caption{Maximal/minimal values for dimensionality (\(m\)), number of generators (\(n\)), iterations (\(it\)) for 2 phases of the algorithm}
\begin{tabular}{crrr|rrr|}
& \multicolumn{6}{c}{} \\\cline{2-7}
\multirow{2}{10mm}{} & \multicolumn{3}{|c}{phase-1} & \multicolumn{3}{|c|}{phase-2} \\\cline{2-7}
& \multicolumn{1}{|c}{\(m\)} & \multicolumn{1}{c}{\(n\)} & \multicolumn{1}{c|}{\(it\)} & \multicolumn{1}{c}{\(m\)} & \multicolumn{1}{c}{\(n\)} & \multicolumn{1}{c|}{\(it\)} \\
\hline
\multicolumn{1}{|c|}{A} & 2950/50 & 20000/513 & 4557/382 & 1950/50 & 20000/513 & 2749/392 \\
\hline
\multicolumn{1}{|c|}{B} & 3950/50 & 40000/506 & 7366/407 & 3950/50 & 40000/506 & 4263/398 \\
\hline
\multicolumn{1}{|c|}{C} & 5950/50 & 60000/504 & 10138/450 & 5950/50 & 60000/504 & 5314/411 \\
\hline
\end{tabular}
\end{center}
\label{rnges}
\end{table}
The \octave-script which implements this algorithm can be found at ResearchGate by following the reference \cite{ocode}.
\subsection{Quadratic optimization}
To show the difference in performance of the finite algorithm given in this paper and the popular iterative algorithm FISTA
\cite{bt09}
we applied our algorithm to solve the quadratic optimization problem described there as one of the tests.
The latter is the trivial quadratic \(n\)-dimensional problem
\bleq{quad}{ \min_{x\in E^n} ~ \|Ax\|^2 = \min_{z \in \lin(A)} ~ \|z\|^2,}
where \(A\) is the Laplacian operator
\[ A = 
\begin{bmatrix}
2 & -1 & & & \\
-1 & 2 & -1 & & \\
& & \dotsm & & \\
& & -1 & 2 & -1 \\
& & & -1 & 2 \\
\end{bmatrix}_{n} .
\]
with \(n = 201\) and \(\lin(A)\) is the linear envelope of the matrix \(A\), considered as the set of its columns.

To convert the problem (\ref{quad}) into conical projection,
notice that \(\lin(A) = \Co\{ -b, A \} = K_A\),
where \(b\) is such vector that \(A b \geq \one\).
As the columns of \(A\) are linear independent, such vector exists and can be obtained as a solution of the least-norm problem
\bleq{squad}{\min_{z \in \co\{A\}} ~ \|z\|^2 = \|b\|^2.}
After this the problem (\ref{quad}) is solved by the Algorithm \ref{algo}
by projecting \(\nll\) on the cone \(K_A\).
Due to the particular nature of this problem, it is not a great surprise that both these problem were
solved in \(n\) and \(n+1\) iterations.
The details of these two solution processes are shown on Fig. \ref{fig2}.
\begin{figure}
\begin{center}
\begin{minipage}{.45\textwidth}
\begin{center}
\includegraphics[scale=0.5]{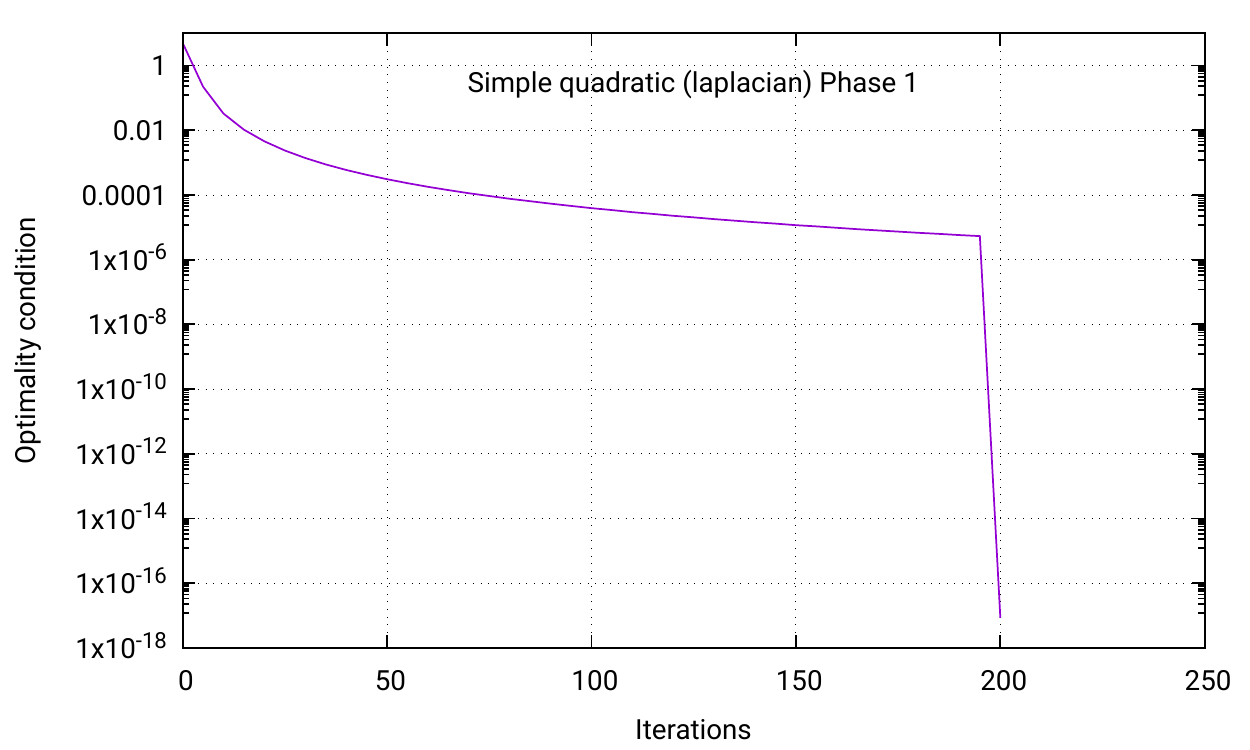} \\
Solution of the problem (\ref{squad})
\end{center}
\end{minipage}
\begin{minipage}{.45\textwidth}
\begin{center}
\includegraphics[scale=0.5]{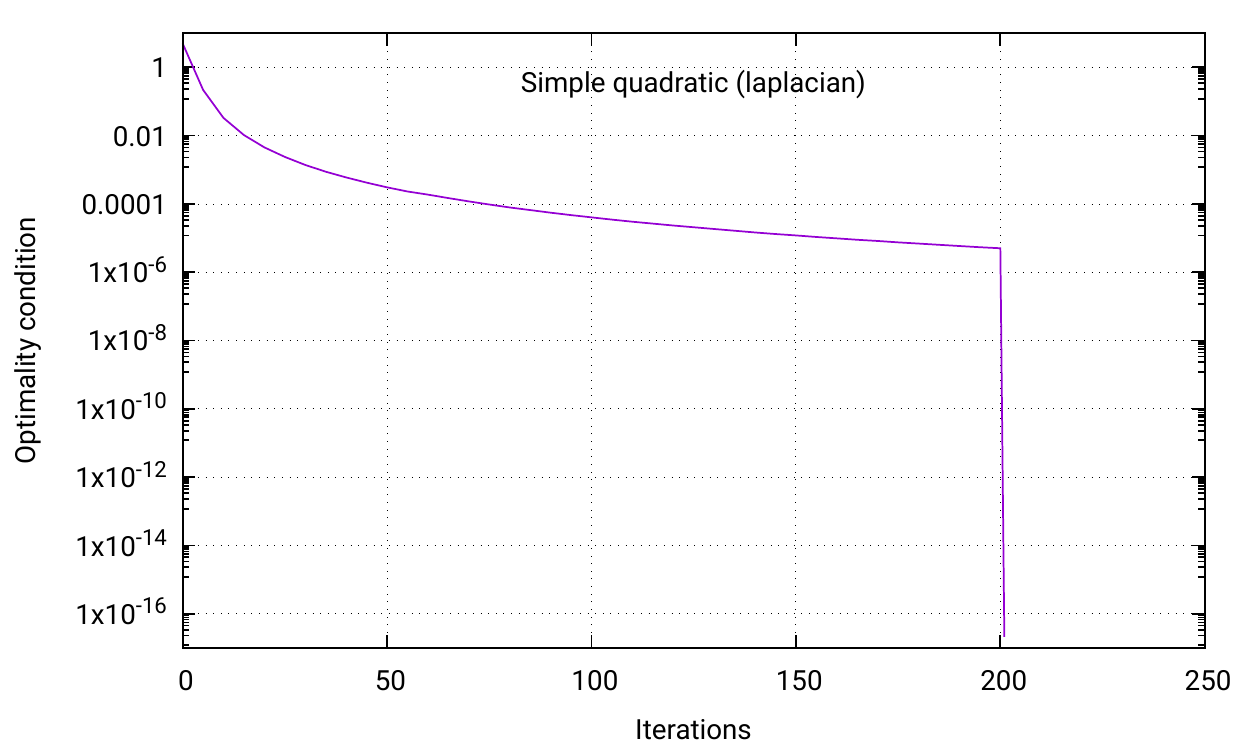} \\
Solution of the problem (\ref{quad}).
\end{center}
\end{minipage}
\\
\caption{
Computational process for solutions of problems (\ref{squad}, \ref{quad}).
Optimality conditions.
}
\end{center}
\label{fig2}
\end{figure}
It is interesting to notice the great similarity between these two processes.
It can be said that both of them, for the most iterations, mainly collected information
on the structure of the set of generators without a substantial decrease in violation of optimality conditions.
When the algorithm collected this information, the projections are computed at the single final step of the processes
practically up to machine accuracy.

For comparison, we present in Fig. \ref{fistas} the results from \cite{imfista} relating to the same quadratic problem.
These graphs show the convergence of the whole family of FISTA-type algorithms.
The convergence of these algorithms is demonstrated there from the point of view of two criteria:
the distance \(\|x^k - x^\star\|\) of the \(k\)-th iteration \(x^k\) from the solution \(x^\star = \nll\)
and deviation of the value of objective function \(\Phi(x^k) = \|x^k\|^2\) from the optimum \(\Phi(x^\star) = \Phi(\nll) = 0\).
One can see the significant progress in acceleration of these algorithms.

\begin{figure}
\subfloat[\(\|x^k- x^\star\|\)]
{ \includegraphics[width=0.4\linewidth]{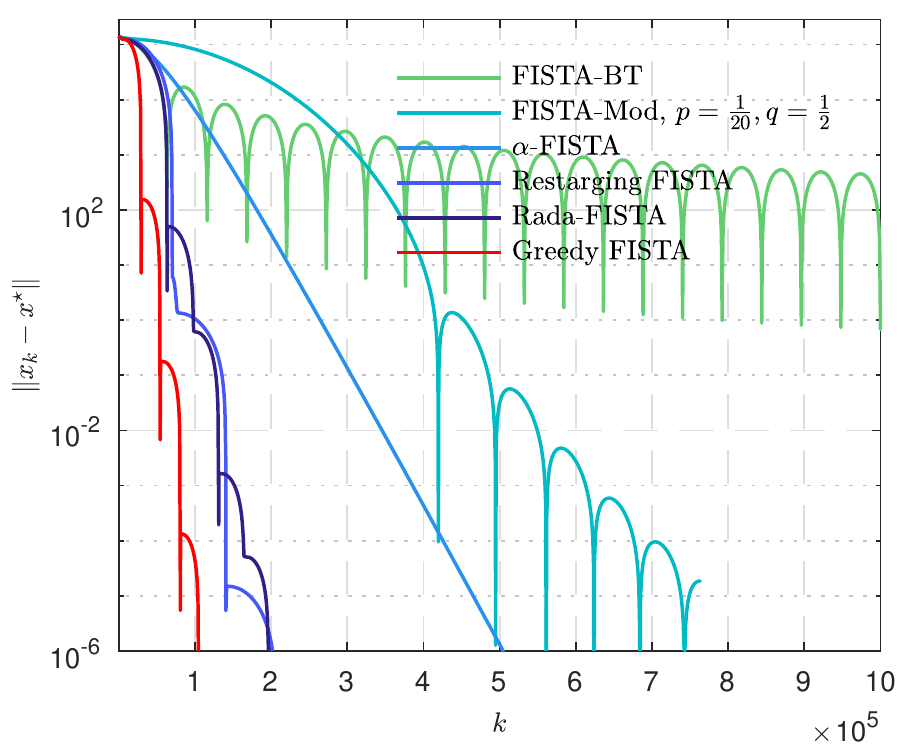} }
\hspace{24pt}
\subfloat[\(\Phi(x^k)-\Phi(x^\star)\)]
{ \includegraphics[width=0.4\linewidth]{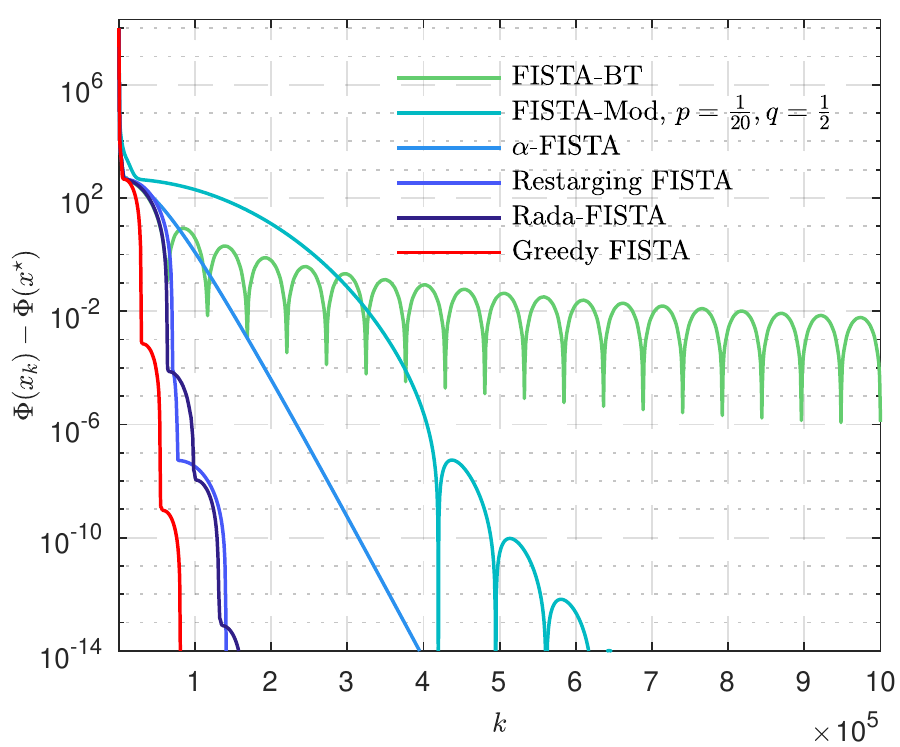} }
\\
\caption{Comparison of different FISTA schemes for least square problem (\ref{quad}).
(a) convergence of \(\|x^k-x^\star\|\); (b) convergence of \(\Phi(x^k)-\Phi(x^\star)\).}
\label{fistas}
\end{figure}

Unfortunately, there are no run-time measurements for these codes in \cite{imfista},
so there is no direct way to compare the practical effectiveness of these algorithms with the CTP routine.
It can only be said that even the fastest of the FISTA-algorithms
by the number of iterations is about 3-orders of magnitude slower than the finite algorithm presented here.
Of course, FISTA-algorithms have a wider applicability area and smaller memory footprint, but it looks like that for
quadratic problems of the medium size, the exact finite algorithm is preferable.
\section*{Conclusions}
As a conclusion, it may be stated that the result of this notice
paves the way for robust and practical approaches for solving 
projection problems on polyhedral cones independently of the proportions between their dimensionality and number of generators.
The computational efficiency of the described algorithm
can be improved
by providing intelligent restart procedures for both phases
of the algorithm or by other means, which will be a subject of further investigations.

\end{document}